\newcommand{\beq}{\begin{equation}}
\newcommand{\eeq}{\end{equation}}
\newcommand{\bea}{\begin{eqnarray}}
\newcommand{\eea}{\end{eqnarray}}
\newcommand{\beas}{\begin{eqnarray*}}
\newcommand{\eeas}{\end{eqnarray*}}
\newtheorem{theorem}{Theorem}[section]
\newtheorem{proposition}[theorem]{Proposition}
\newtheorem{corollary}[theorem]{Corollary}
\newtheorem{remark}[theorem]{Remark}
\newtheorem{example}[theorem]{Example}
\newtheorem{examples}[theorem]{Examples}
\newtheorem{foo}[theorem]{Remarks}
\newenvironment{proof}{\addvspace{\medskipamount}\par\noindent{\it
Proof}.}
{\unskip\nobreak\hfill$\Box$\par\addvspace{\medskipamount}}
\newcommand{\R}[1]{\mathbb{R}}     
\title{Subelliptic Li-Yau estimates on three dimensional model spaces}
\author{Dominique Bakry \footnote{bakry@math.univ-toulouse.fr},
Fabrice Baudoin \footnote{fbaudoin@math.univ-toulouse.fr},
Michel Bonnefont \footnote{bonnefon@math.univ-toulouse.fr}, Bin Qian\footnote{binqiancn@yahoo.com.cn, This author would like to express
sincere thanks to China Scholarship council for financial support} \\
{\small Institut de Math\'ematiques de Toulouse} \\
{\small Universit\'e de Toulouse} \\
{\small CNRS 5219} \\
}
\begin{document}

\maketitle

\begin{abstract}
We describe three elementary models in three dimensional subelliptic
geometry which correspond to the three models of the Riemannian
geometry (spheres, Euclidean spaces and Hyperbolic spaces) which are
respectively the $SU(2)$, Heisenberg and $SL(2)$ groups. On those
models,  we  prove parabolic Li-Yau inequalities on positive
solutions of the heat equation. We use for that the $\Gamma_{2}$
techniques that we adapt to those elementary model spaces. The
important feature developed here is that although the usual notion
of Ricci curvature is meaningless (or more precisely leads to bounds
of the form $-\infty$ for the Ricci curvature), we describe a
parameter $\rho$ which plays the same r\^ole as the lower bound on
the Ricci curvature, and from which one deduces the same kind of
results as one does in Riemannian geometry, like heat kernel upper
bounds, Sobolev inequalities and diameter estimates.
\end{abstract}

\section{Framework and Introduction}

The estimation of heat kernel measures is a topic which had been
under thorough investigation for the last thirty years at least, see
\cite{LY,davies}. Among the many techniques developed for that, the
famous Li-Yau parabolic inequality \cite{LY} is a very powerful
tool, which relies in Riemannian geometry bounds on the gradient on
heat kernels to lower bounds on the Ricci curvature. More precisely,
in the simplest form, it asserts that, if $E$ is a smooth Riemannian
manifold with dimension $n$ and  non negative Ricci curvature, then
if $f$ is any positive solution of the heat equation
$$\partial_{t} f= \Delta f,$$ where $\Delta$ is the Laplace Beltrami
operator of $E$,
then, if $u= \log f$
$$\partial_{t} u \geq \left|\nabla u\right|^{2}- \frac{n}{2}t.$$

This is a very precise and powerful estimate. For the model case,
which is here the Euclidean space $E=\mathbb{R} ^n$ and when $f$ is
the heat kernel (that is the solution of the heat equation starting
at time $t=0$ from a Dirac mass), then this inequality is in fact an
equality.

From this inequality, one may easily deduce Harnack inequalities and
hence precise bounds on the heat kernel.

Many generalizations of this inequality have been developed, all of
them including lower bounds on the Ricci tensor. In particular, it
works for a general elliptic operator $L$ under the assumption that
it satisfies a curvature-dimension inequality $CD(\rho,n)$, which is
the furthermost generalization on the notion of lower bound on the
Ricci curvature, see \cite{bakry-qian,bakry-ledoux}.

In the non elliptic case, things appear to be infinitely more
complicated. In particular, most of the hypoelliptic systems do not
satisfy any $CD(\rho,n)$ inequality (any reasonable notion of lower
bound on the Ricci tensor leads to the value $-\infty$).
Nevertheless, some Li-Yau inequalities may be obtained \cite{CY}.

In what follows, we shall use the $\Gamma_{2}$ techniques developed
in \cite{bakry-ledoux} to produce these Li-Yau bounds. The method
developed here works quite well on the simple models developed here
(Heisenberg groups, $SU(2)$, $SL(2)$), but could be easily
generalized to a larger class of hypoelliptic operators. We shall
not try to present here the most general results, but concentrate
for simplicity on the three model cases mentioned above. In fact,
they should be thought of as the analogous of the model spaces of
Riemmanian geometry (Euclidean spaces, Spheres and Hyperbolic
spaces).

In all what follows, given an elliptic second order operator $L$ on a smooth
manifold, with no constant term, one defines
$$\Gamma(f,g)= \frac{1}{2}(L(fg)-fLg-gLf)$$ which stands for
$\nabla f\cdot \nabla g$ in the Riemannian case, and the curvature
dimension inequality is defined from the operator $\Gamma_{2}$
$$\Gamma_{2}(f,f) = \frac{1}{2}(\Gamma(f,f)-2\Gamma(f,Lf).$$
Then, $L$ is said to satisfy a $CD(\rho,n)$ inequality if, for any
smooth function $f$, one has
$$\Gamma_{2}(f,f) \geq \rho\Gamma(f,f) + \frac{1}{n}(Lf)^{2}.$$

The parabolic Li-Yau inequality is then described in terms of the
quantity $\left|\nabla f\right|^{2}= \Gamma(f,f)$ and the parameters
$\rho$ and $n$. For the Laplace Beltrami operator $L=\Delta$ on a
smooth Riemannian manifold, this amounts to say that the dimension
is at most  $n$ and that the Ricci curvature is bounded below by
$\rho$. In the hypoelliptic models that we describe below, however,
no such inequality holds (the best possible constant $\rho$ is
$-\infty$), but we shall produce some analogous of the Li-Yau
inequality through a parameter $\rho$ which therefore plays the
r\^ole of a substitute for the Ricci curvature.

In what follows we consider a three-dimensional Lie group $\mathbf{G}$ with Lie algebra $\mathfrak{g}$ and we assume that there is a
 basis $\left\{ X,Y ,Z \right\}$ of $\mathfrak{g}$  such that

$$[X,Y]=Z$$
$$[X,Z]= -\rho Y$$
$$[Y,Z]=\rho X$$

where $\rho \in \mathbb R$.

\begin{example}[$\mathbf{SU}(2)$, $\rho=1$]
The Lie group $\mathbf{SU} (2)$ is the group of
$2 \times 2$, complex, unitary matrices of determinant $1$. Its
Lie algebra $\mathfrak{su} (2)$ consists of $2 \times 2$, complex,
skew-adjoint matrices of trace $0$. A basis of $\mathfrak{su} (2)$
is formed by the Pauli matrices:
 \[
\text{ }X=\frac{1}{2}\left(
\begin{array}{cc}
~0~ & ~1~ \\
-1~& ~0~
\end{array}
\right) ,\text{ }Y=\frac{1}{2}\left(
\begin{array}{cc}
~0~ & ~i~ \\
~i~ & ~0~
\end{array}
\right),
Z=\frac{1}{2}\left(
\begin{array}{cc}
~i~ & ~0~ \\
~0~ & -i~
\end{array}
\right) ,
\]
for which the following relationships hold
\begin{align}\label{Liestructure}
[X,Y]=Z, \quad [X,Z]=-Y, \quad [Y,Z]=X.
\end{align}
\end{example}

\begin{example}[Heisenberg group, $\rho=0$]
The Heisenberg group $\mathbb{H}$ is the group of $3\times3$ matrices:
\[
\left(
\begin{array}
[c]{ccc}
~1~ & ~x~   & ~z ~\\
~0~ & ~1~   & ~y ~\\
~0~ & ~0~   & ~1 ~
\end{array}
\right)  ,\text{ \ }x,y,z\in\mathbb{R}.
\]
The Lie algebra of $\mathbb{H}$ is spanned by the matrices
\[
X=\left(
\begin{array}
[c]{ccc}
~0~ & ~1~ & ~0~\\
~0~ & ~0~ & ~0~\\
~0~ & ~0~ & ~0~
\end{array}
\right)  ,\text{ }Y=\left(
\begin{array}
[c]{ccc}%
~0~ & ~0~ & ~0~\\
~0~ & ~0~ & ~1~\\
~0~ & ~0~ & ~0~
\end{array}
\right)  \text{ and }Z=\left(
\begin{array}
[c]{ccc}%
~0~ & ~0~ & ~1~\\
~0~ & ~0~ & ~0~\\
~0~ & ~0~ & ~0~
\end{array}
\right)  ,
\]
for which the following equalities hold
\[
\lbrack X,Y]=Z,\text{ }[X,Z]=[Y,Z]=0.
\]

\end{example}

\begin{example}[$\mathbf{SL}(2)$, $\rho=-1$]
The Lie group $\mathbf{SL} (2)$ is the group of
$2 \times 2$, real matrices of determinant $1$. Its
Lie algebra $\mathfrak{sl} (2)$ consists of $2 \times 2$ matrices of trace $0$.
A basis of $\mathfrak{sl} (2)$ is formed by the matrices:
 \[
X=\frac{1}{2}\left(
\begin{array}{cc}
~1~ & ~0~ \\
~0~ & -1~
\end{array}
\right)
,\text{ }Y=\frac{1}{2}\left(
\begin{array}{cc}
~0~ & ~1~ \\
~1~ & ~0~
\end{array}
\right),
\text{ }Z=\frac{1}{2}\left(
\begin{array}{cc}
~0~ & ~1~ \\
-1~& ~0~
\end{array}
\right)
,
\]
for which the following relationships hold
\begin{align}\label{Liestructure}
[X,Y]=Z, \quad [X,Z]=Y, \quad [Y,Z]=-X.
\end{align}
\end{example}

We consider on the Lie group $\mathbf{G}$ the subelliptic, left-invariant, second order differential operator
$$L= X^{2}  + Y^{2},$$
as well as the heat semigroup
$$ P_t=e^{t L}.$$
We also set $$\Gamma(f,f) =\frac{1}{2}(Lf^2-2fLf)= (Xf)^{2}+ (Yf)^{2},$$ and
$$\Gamma_{2} = \frac{1}{2}(L\Gamma(f,f) - 2\Gamma(f, Lf)).$$

In the present setting,
\begin{align}\label{Gamma2}
\Gamma_2 (f,f)= (X^2f)^2+(Y^2f)^2+\frac{1}{2} \left( (XY+YX)f \right)^2+ \frac{1}{2}(Zf)^2 +
\rho \Gamma (f,f)-2 (Xf)(YZf)+2(Yf)(XZf).
\end{align}

The mixt terms $-2 (Xf)(YZf)+2(Yf)(XZf)$ prevents to find any lower
bound on this quantity involving $\Gamma(f,f)$ and $(Lf)^{2}$ only,
whence the absence of any $CD(\rho,n)$ inequality.

\section{Li-Yau type estimates for the heat semigroup}

The classical method of Li and Yau \cite{LY} consists in applying
the maximum principle to a carefully chosen expression. The method
developed in \cite{bakry-ledoux} is quite different. Considering a
positive solution of the heat equation $\partial_{t} f = Lf$, and
denoting $f \mapsto P_{t}f$ the associated heat kernel, one writes
$u = \log f$ and look at the expression
$$ \Phi(s) = P_{s}(f(t-s) \Gamma(u(t-s), u(t-s))),$$ defined  for $0<s<t$. Then, one obtains
through the $CD(\rho,n)$ inequality a differential inequality
$$\Phi'(s) \geq (A\Phi(s)+B)^{2} + C,$$ where $A, B, C$ are
expressions which are constant in $t$ but may depend on the function
$f$. Then, the parabolic Li-Yau inequality is obtained as a
consequence of this differential inequality.

Here, we shall develop this method a bit further, looking at more
complicated quantities like
$$P_{s}(f(t-s)(a(s)\Gamma(u(t-s), u(t-s))+ b(s) (Zu(t-s))^{2})),$$
and try to get some differential inequality on it. The computations
developed here are not restricted to Lie group, since we only use an
generalized $CD(\rho,n)$ inequality. There are many hypoelliptic
systems that may be treated under the same lines. The reason why we
restrict ourselves to those model cases described previously  are
mainly for pedagogical reasons.

We have the following inequality, which is our technical starting
point:
\begin{proposition}\label{inegdiff}
Let $f:\mathbf{G} \rightarrow \mathbb{R}$ be positive. Let $t >0$,
for all $x \in \mathbf{G}$ and $s\in [0,t]$, consider the
expressions
\[
\Phi_1 (s)=P_s ((P_{t-s} f) \Gamma (\ln P_{t-s} f))(x)
\]
and
\[
\Phi_2 (s)=P_s ((P_{t-s} f) (Z \ln P_{t-s} f)^2)(x).
\]
Then, for every differentiable, non-negative and decreasing function $b:[0,t] \rightarrow \mathbb{R}$,
\[
\left( - b' \Phi_1 +b \Phi_2 \right)' (s)
\ge - b'(s) \left( \left( \frac{b''(s)}{b'(s)}+2 \frac{b'(s)}{b(s)} + 2 \rho \right) LP_t f (x)-\frac{1}{4}\left( \frac{b''(s)}{b'(s)} +2 \frac{b'(s)}{b(s)} + 2 \rho\right)^2 P_t f (x)\right).
\]
\end{proposition}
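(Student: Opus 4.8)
The plan is to differentiate the two $\Phi_i$ explicitly, reduce everything to a $\Gamma_2$-type computation at time $t-s$ pushed forward by $P_s$, and then use the explicit form \eqref{Gamma2} of $\Gamma_2$ to absorb the troublesome mixed terms $-2(Xf)(YZf)+2(Yf)(XZf)$ against the extra $(Zu)^2$ term carried by $\Phi_2$. First I would recall the standard Bakry–Ledoux differentiation lemma: if $u=\ln P_{t-s}f$ and $h$ is any function of $t-s$, then $\frac{d}{ds}P_s\big((P_{t-s}f)\,h\big) = P_s\big((P_{t-s}f)\,(\partial_s + \widetilde L)h\big)$ where $\widetilde L v = Lv + 2\Gamma(u,v)$ is the carré-du-champ-twisted generator, because $\partial_s(P_{t-s}f)= -LP_{t-s}f$ and the $\Gamma$ operator satisfies the usual chain rule. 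Applying this with $h=\Gamma(u,u)$ gives, after using $\partial_s u = -Lu - \Gamma(u,u)$, the identity $\Phi_1'(s) = 2P_s\big((P_{t-s}f)\,\Gamma_2(u,u)\big)$, the classical starting point; applying it with $h=(Zu)^2$ and using that $Z$ commutes appropriately with $L$ in the sense dictated by the bracket relations $[X,Z]=-\rho Y$, $[Y,Z]=\rho X$ gives a companion identity $\Phi_2'(s) = P_s\big((P_{t-s}f)\,\big[\,2\,\Gamma(Zu,Zu) + \text{(cross terms)}\,\big]\big)$ plus lower-order pieces in $Zu$. I expect the bracket relations to contribute exactly a term matching, up to sign, the mixed terms appearing in \eqref{Gamma2}.

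Next I would form the combination $\big(-b'\Phi_1 + b\Phi_2\big)'(s) = -b''\Phi_1 - b'\Phi_1' + b'\Phi_2 + b\Phi_2'$ and substitute the two differentiated identities. The key algebraic step is that inside $P_s\big((P_{t-s}f)\,\cdot\,\big)$ the integrand becomes a quadratic form in the derivatives $X^2u, Y^2u, (XY+YX)u, Zu, XZu, YZu$, with coefficients depending on $b, b', b''$ and $\rho$. Using \eqref{Gamma2} to expand $\Gamma_2(u,u)$, the mixed terms $-2(Xu)(YZu)+2(Yu)(XZu)$ from $-b'\Phi_1'$ are to be cancelled by the cross terms coming from $b\Phi_2'$ (these are of the schematic form $2(Xu)(YZu)-2(Yu)(XZu)$ after integrating against $P_s$, precisely because $b\Phi_2'$ produces $\Gamma(u, (Zu)^2)$-type contributions and the commutators reshuffle $ZXu$ into $XZu\pm\rho\cdot(\text{first order})$). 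This cancellation is the whole point of introducing $\Phi_2$, and it should be exact once $b$ is there to tune the weight.

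After the cancellation, the surviving integrand is a sum of genuine squares $ (X^2u)^2 + (Y^2u)^2 + \tfrac12((XY+YX)u)^2$, a quadratic form in $Zu$ and the first-order quantities with $b$-dependent coefficients, plus a multiple of $\Gamma(u,u)$ and a multiple of $(Lu)^2$. Completing the square in the dominant variable — i.e. writing the square terms as $\geq \tfrac12(Lu)^2$ (since $(X^2u)^2+(Y^2u)^2\geq\tfrac12(X^2u+Y^2u)^2=\tfrac12(Lu)^2$) and then bounding the whole quadratic-form-in-$Zu$-and-$Lu$ from below — one isolates a perfect square in $Lu$ with coefficient proportional to $-b'$ times $\big(\tfrac{b''}{b'}+2\tfrac{b'}{b}+2\rho\big)$, leaving a negative remainder $-\tfrac14\big(\tfrac{b''}{b'}+2\tfrac{b'}{b}+2\rho\big)^2$. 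Since $b$ is non-negative, decreasing (so $-b'\geq 0$), one then drops the non-negative square, and finally uses $P_s((P_{t-s}f)\,Lu)$-type identities together with $P_s L P_{t-s} f = L P_t f$ and $P_s P_{t-s} f = P_t f$ to rewrite $P_s\big((P_{t-s}f)\,Lu\big) = LP_t f(x)$ (because $(P_{t-s}f)Lu = L P_{t-s}f - (P_{t-s}f)\Gamma(u,u)$, and the $\Gamma$ contribution is exactly $\Phi_1$ which the linear combination is designed to carry) and $P_s\big((P_{t-s}f)\big) = P_t f(x)$. This yields the stated inequality.

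The main obstacle will be bookkeeping in the quadratic-form step: one must verify that after the mixed-term cancellation the residual quadratic form in $(Zu, Lu, \Gamma$-data$)$ is genuinely bounded below by the claimed single square minus the claimed constant, and in particular that no leftover cross term between $Zu$ and the second-order block spoils the completion of the square. I expect this to work precisely because the coefficient combination $\tfrac{b''}{b'}+2\tfrac{b'}{b}+2\rho$ is engineered so that the $Zu$-quadratic form is a perfect square (its discriminant vanishes), so that $Zu$ can be eliminated cleanly and only the $Lu$ square survives. A secondary technical point is justifying the differentiation under $P_s$ and the various semigroup identities rigorously, which requires the positivity of $f$, smoothness, and the hypoellipticity of $L$ guaranteeing $P_{t-s}f>0$ and smooth — this is standard for these model groups and I would simply invoke it.
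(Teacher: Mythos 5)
Your overall architecture is the paper's: differentiate $\Phi_1,\Phi_2$, form $(-b'\Phi_1+b\Phi_2)'$, reduce to a pointwise quadratic-form bound, linearize $(Lu)^2$, and use $P_s((P_{t-s}f)Lu)=LP_tf-\Phi_1$, $P_sP_{t-s}f=P_tf$. But the step you single out as ``the whole point of introducing $\Phi_2$'' --- an \emph{exact cancellation} of the mixed terms $-2(Xu)(YZu)+2(Yu)(XZu)$ of $\Gamma_2$ against cross terms allegedly produced by $b\Phi_2'$ --- is not available, because those cross terms do not exist. Computing $\Phi_2'$ directly, the commutator contributions are of the form $4(Zu)\left((Xu)[X,Z]u+(Yu)[Y,Z]u\right)=4\rho(Zu)\left(-(Xu)(Yu)+(Yu)(Xu)\right)=0$ (this, together with $[L,Z]=0$, is exactly the ``crucial fact'' invoked in the paper), so one gets the clean identity $\Phi_2'(s)=2P_s\left((P_{t-s}f)\,\Gamma(Z\ln P_{t-s}f)\right)$ with no terms of the shape $(Xu)(YZu)$ at all. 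Consequently the mixed terms of $\Gamma_2$ survive uncancelled in $-b'\Phi_1'$, and your ``residual quadratic form'' is genuinely indefinite; the completion-of-square bookkeeping you defer to cannot close as described.

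What is actually needed (and what the paper does) is a Cauchy--Schwarz/Young estimate rather than a cancellation: for every $\lambda>0$,
\begin{equation*}
\Gamma_2(g)\ \ge\ \tfrac12 (Lg)^2+\tfrac12 (Zg)^2+\Bigl(\rho-\tfrac1\lambda\Bigr)\Gamma(g)-\lambda\,\Gamma(Zg),
\end{equation*}
so the price of the mixed terms is split into a $\Gamma(Zg)$ loss and a $\Gamma(g)$ loss. The $\Gamma(Zg)$ loss is absorbed precisely by the $b\Phi_2'$ term once one ties the parameter to the weight, $\lambda=-b/b'$ (equivalently $a=-b'$, which also makes the coefficient $a+b'$ of $\Phi_2$ vanish, eliminating the $(Zu)^2$ block); the $\Gamma(g)$ loss goes into the coefficient of $\Phi_1$, and the linearization $(Lu)^2\ge 2\gamma Lu-\gamma^2$ with the specific choice $\gamma=\tfrac12\bigl(\tfrac{b''}{b'}+2\tfrac{b'}{b}+2\rho\bigr)$ is what kills that coefficient and produces exactly the constants in the stated inequality. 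So the gap is concrete: your proof is missing the $\lambda$-trade-off that links $b$ and $b'$, and the mechanism you propose in its place (exact cancellation via bracket-generated cross terms in $\Phi_2'$) is contradicted by the very identity that makes $\Phi_2'$ computable.
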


\begin{proof}
We fix a positive function $f$, $t>0$ and we perform all the
following computations  at a given point $x$.

With the same notations as Proposition \ref{inegdiff},
straightforward (but quite tedious) computations show that
\[
\Phi'_1 (s)=2P_s ((P_{t-s} f) \Gamma_2 (\ln P_{t-s} f))
\]
and
\[
\Phi'_2 (s)=2P_s ((P_{t-s} f) \Gamma(Z \ln P_{t-s} f)).
\]

For the last equality we use the crucial facts that
$$[L,Z]=0$$ and $$X(f)Z(f)[X,Z](f)+ Y(f) Z(f)[Y,Z](f)=0.$$

Now, thanks to the Cauchy-Schwarz inequality, the expression (\ref{Gamma2}), shows that for every $\lambda >0$,  and every smooth function $g$,
\[
\Gamma_2 (g) \ge \frac{1}{2} (Lg)^2 +\frac{1}{2} (Zg)^2 +\left(\rho-\frac{1}{\lambda} \right) \Gamma (g)-\lambda \Gamma (Zg).
\]
We therefore obtain the following differential inequality
\[
\Phi'_1 (s) \ge P_s ((P_{t-s}f) (\mathcal{L} \ln P_{t-s}f)^2) + \Phi_2 (s) +\left(2\rho-\frac{2}{\lambda} \right)\Phi_1 (s) -\lambda \Phi_2'(s).
\]
We now have that for every $\gamma \in \mathbb{R}$,
\[
(L \ln P_{t-s} f)^2 \ge 2 \gamma L \ln P_{t-s}f - \gamma^2,
\]
and
\[
\mathcal{L} \ln P_{t-s}f =\frac{L P_{t-s} f}{P_{t-s}f} -\frac{\Gamma (P_{t-s}f)}{(P_{t-s}f)^2 }.
\]
Thus, for every $\lambda >0$ and every $\gamma \in \mathbb{R}$,
\[
\Phi'_1 (s) \ge \left(2 \rho-\frac{2}{\lambda} -2 \gamma \right) \Phi_1 (s) + \Phi_2 (s) -2 \lambda \Phi_2' (s) +2 \gamma \mathcal{L}P_t f - \gamma^2 P_t f.
\]
Now for two functions $a$ and $b$ defined on the time interval $[0,t)$ with $a$ positive,  we have
$$(a \Phi_1 +b \Phi_2)' \geq \left( a' +(2 \rho -\frac{2}{\lambda}- 2 \gamma ) a\right) \Phi_1
+ (a+b') \Phi_2 + (-a \lambda +b) \Phi_2 ' + 2 a \gamma L P_t f-a\gamma^2 P_t  f.$$

So, if $b$ is a positive decreasing function on the time interval $[0,t)$, by choosing in the previous inequality
$$a=-b',
$$
\[
\lambda=- \frac{b}{b'},
\]
and
\[
\gamma=\frac{1}{2} \left( \frac{b''}{b'}+2 \frac{b'}{b} + 2 \rho \right),
\]
we get the desired result.
\end{proof}
As a first corollary, by using the function
\[
b(s)=\left( t-s\right)^\alpha, \quad \alpha >2
\]
and integrating from 0 to $t$, we deduce
\begin{corollary}\label{Li-Yau}
For all $\alpha>2$, for every positive function $f$ and $t >0$,
\[
\Gamma (\ln P_t f) +\frac{t}{\alpha}  (Z \ln P_t f)^2  \leq \left(
\frac{3 \alpha -1}{\alpha -1} -\frac{2 \rho t}{\alpha} \right)
\frac{LP_t f}{P_t f} + \frac{ \rho^2 t}{\alpha}  -
\frac{\rho(3\alpha-1)}{\alpha-1} + \frac{(3\alpha-1)^2}{\alpha-2}
\frac{1}{t}.\]
\end{corollary}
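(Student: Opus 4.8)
The plan is to specialize Proposition~\ref{inegdiff} to the weight $b(s)=(t-s)^\alpha$ with $\alpha>2$ and then integrate the resulting differential inequality over $s\in[0,t]$. First I would record that this $b$ is admissible for the Proposition: on $[0,t]$ it is non-negative and non-increasing, with $b'(s)=-\alpha(t-s)^{\alpha-1}$ and $b''(s)=\alpha(\alpha-1)(t-s)^{\alpha-2}$, and since $\alpha>2$ it extends twice differentiably to the endpoint $s=t$, where $b(t)=b'(t)=0$. A one-line computation gives the logarithmic combination occurring in the Proposition,
\[
\frac{b''(s)}{b'(s)}+2\frac{b'(s)}{b(s)}+2\rho=2\rho-\frac{3\alpha-1}{t-s},\qquad -b'(s)=\alpha(t-s)^{\alpha-1}\ge 0,
\]
so that the conclusion of Proposition~\ref{inegdiff} reads
\[
\bigl(-b'\Phi_1+b\Phi_2\bigr)'(s)\ \ge\ \alpha(t-s)^{\alpha-1}\left[\Bigl(2\rho-\tfrac{3\alpha-1}{t-s}\Bigr)LP_t f(x)-\tfrac14\Bigl(2\rho-\tfrac{3\alpha-1}{t-s}\Bigr)^2P_t f(x)\right].
\]

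Next I would integrate this in $s$ from $0$ to $t$. On the left-hand side the fundamental theorem of calculus yields $\bigl(-b'\Phi_1+b\Phi_2\bigr)(t)-\bigl(-b'\Phi_1+b\Phi_2\bigr)(0)$; the contribution at $s=t$ vanishes because $b(t)=b'(t)=0$ while $\Phi_1,\Phi_2$ stay bounded there (as $s\to t$ one has $P_{t-s}f\to f$, hence $\Phi_1(s)\to P_t(f\,\Gamma(\ln f))(x)$, and likewise for $\Phi_2$), and the contribution at $s=0$ equals $-\alpha t^{\alpha-1}(P_t f)\Gamma(\ln P_t f)(x)-t^\alpha(P_t f)(Z\ln P_t f)^2(x)$ because $P_0$ is the identity. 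On the right-hand side $LP_t f(x)$ and $P_t f(x)$ are constant in $s$, so one is reduced to the two elementary integrals $\int_0^t(t-s)^{\alpha-1}\bigl(2\rho-\tfrac{3\alpha-1}{t-s}\bigr)\,ds$ and $\int_0^t(t-s)^{\alpha-1}\bigl(2\rho-\tfrac{3\alpha-1}{t-s}\bigr)^2\,ds$; expanding the square and substituting $u=t-s$ reduces everything to $\int_0^t u^\beta\,du$ with $\beta\in\{\alpha-1,\alpha-2,\alpha-3\}$. The lowest exponent $\alpha-3$ is exactly why $\alpha>2$ is needed for convergence, and it is $\int_0^t u^{\alpha-3}\,du=t^{\alpha-2}/(\alpha-2)$ that eventually produces the term proportional to $(3\alpha-1)^2/\bigl((\alpha-2)t\bigr)$ in the statement.

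Finally I would collect terms: transfer the negative boundary contribution at $s=0$ to the other side and divide through by $\alpha t^{\alpha-1}(P_t f)(x)>0$, which turns the left-hand side into $\Gamma(\ln P_t f)+\frac{t}{\alpha}(Z\ln P_t f)^2$ at $x$ and the right-hand side into a combination of $\frac{LP_t f}{P_t f}(x)$ and $1$ whose coefficients are read off from the values of the three monomial integrals; these are precisely the rational functions of $\alpha,\rho,t$ in the displayed inequality. The one genuine obstacle is the endpoint analysis at $s=t$: the combination $\frac{b''}{b'}+2\frac{b'}{b}+2\rho$ blows up like $(t-s)^{-1}$ there, so one must check both that $\bigl(-b'\Phi_1+b\Phi_2\bigr)(s)\to 0$ and that the right-hand side of the integrated inequality is integrable near $s=t$; the prefactor $(t-s)^{\alpha-1}$ tames the linear term for any $\alpha>1$ but tames the quadratic term only for $\alpha>2$, which is what fixes the admissible range of $\alpha$. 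Everything else is routine bookkeeping of elementary integrals and of signs.
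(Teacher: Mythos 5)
Your proposal is correct and is exactly the paper's (only sketched) argument: take $b(s)=(t-s)^\alpha$ in Proposition~\ref{inegdiff}, use $b(t)=b'(t)=0$ to kill the boundary term at $s=t$, integrate over $[0,t]$, and divide by $\alpha t^{\alpha-1}P_tf$, with $\alpha>2$ needed precisely for the integrability of the $(t-s)^{\alpha-3}$ term. In fact your bookkeeping yields the constant $\frac{(3\alpha-1)^2}{4(\alpha-2)t}$ for the last term, which is slightly sharper than the stated $\frac{(3\alpha-1)^2}{(\alpha-2)t}$ and hence implies the corollary as written.
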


Observe that this takes a simpler form when $\rho\geq0$, since then
one can use  proposition \ref{inegdiff} with $\rho=0$ and get

\begin{corollary}\label{Li-Yau3}

    When $\rho\geq0$, there exist constants $A,B$ and $C$ such that,
    with $u= \ln (P_{t} f)$
    $$\partial_{t} u \geq A\Gamma(u) + Bt(Zu)^{2}  -\frac{C}{t}.$$

   \end{corollary}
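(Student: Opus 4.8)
The plan is to obtain this statement by a direct rearrangement of Corollary \ref{Li-Yau}, specialized to the value $\rho=0$. The first point to make is that this specialization is legitimate as soon as the structural parameter satisfies $\rho\geq0$. Indeed, in the proof of Proposition \ref{inegdiff} the parameter $\rho$ enters only through the term $\rho\,\Gamma(g)$ arising from the expression (\ref{Gamma2}), and the Cauchy--Schwarz step yields
\[
\Gamma_2(g)\ \ge\ \tfrac12 (Lg)^2+\tfrac12 (Zg)^2+\Bigl(\rho-\tfrac1\lambda\Bigr)\Gamma(g)-\lambda\,\Gamma(Zg).
\]
When $\rho\geq0$ one has $\rho\,\Gamma(g)\geq0$, so this term may simply be dropped; every subsequent inequality in the proof then goes through verbatim with $\rho$ replaced by $0$. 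Hence Corollary \ref{Li-Yau} holds, for each fixed $\alpha>2$, in the form
\[
\Gamma(\ln P_t f)+\frac{t}{\alpha}\,(Z\ln P_t f)^2\ \le\ \frac{3\alpha-1}{\alpha-1}\,\frac{LP_t f}{P_t f}+\frac{(3\alpha-1)^2}{\alpha-2}\,\frac1t .
\]

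Next I would invoke the elementary identity $\partial_t u=\partial_t\ln P_tf=\dfrac{\partial_t P_tf}{P_tf}=\dfrac{LP_t f}{P_t f}$, which is valid because $f\mapsto P_tf$ solves the heat equation $\partial_t P_tf=LP_tf$. Substituting this into the displayed inequality and solving for $\dfrac{LP_tf}{P_tf}$ gives, with $u=\ln P_tf$,
\[
\partial_t u\ \ge\ \frac{\alpha-1}{3\alpha-1}\,\Gamma(u)\ +\ \frac{\alpha-1}{\alpha(3\alpha-1)}\,t\,(Zu)^2\ -\ \frac{(\alpha-1)(3\alpha-1)}{\alpha-2}\,\frac1t .
\]
Thus one may take $A=\dfrac{\alpha-1}{3\alpha-1}$, $B=\dfrac{\alpha-1}{\alpha(3\alpha-1)}$ and $C=\dfrac{(\alpha-1)(3\alpha-1)}{\alpha-2}$, each of which is strictly positive for $\alpha>2$. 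Fixing, for instance, $\alpha=3$ produces the explicit constants $A=\tfrac14$, $B=\tfrac1{12}$, $C=16$, which already proves the claim.

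There is essentially no obstacle here: the corollary is a repackaging of Corollary \ref{Li-Yau} together with the identity $\partial_t u=LP_tf/P_tf$. The only step deserving a line of justification — and the single place where a careless argument would slip — is the passage from the general $\rho$ to $\rho=0$ under the hypothesis $\rho\geq0$, which is precisely the monotonicity observation recorded above and which I would state explicitly rather than leaving implicit in "use Proposition \ref{inegdiff} with $\rho=0$". If sharper constants were desired one could optimize over $\alpha>2$, but since the statement only asserts the existence of $A,B,C$, a single admissible choice of $\alpha$ is enough.
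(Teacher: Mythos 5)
Your proposal is correct and takes essentially the same route as the paper: the paper's argument is precisely the observation that for $\rho\geq0$ one may apply Proposition \ref{inegdiff} (and hence Corollary \ref{Li-Yau}) with $\rho$ replaced by $0$, and then rearrange using $\partial_t u = LP_tf/P_tf$. Your explicit rearrangement and sample constants ($\alpha=3$, giving $A=\tfrac14$, $B=\tfrac1{12}$, $C=16$) simply make that one-line observation quantitative.
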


   In particular, one gets $\partial_{t}u \geq -C/t$, which gives
   $$P_{t}f \leq t^{-C}P_{1}f.$$

   On the Heisenberg group, one sees that the behavior of $P_{t}f$
   when $t$ goes to $0$ is of order $t^{-2}$ (a simple dilation
   argument shows that). Therefore, one sees that the optimal
   constant $C$ in the previous inequality is $C=2$. Unfortunately,
   it can be shown by some elementary considerations similar to those
    developed in the proof of corollary \ref{tps_long} that the best
   constant one may obtain from the previous proposition shall always
   produce a constant $C>2$. This is a strong difference with the
   classical parabolic Li-Yau inequality  where the inequality
   $$\partial_t u \geq -\frac{n}{2t}$$ gives the right order of
   magnitude of the heat kernel near $t=0$.

Now when $ \rho >0$, we easily get an exponential decay by using the function:
\[
b(s)=\left( e^{-\frac{2\rho s}{3 \alpha }} - e^{-\frac{2\rho t}{3 \alpha }} \right)^\alpha, \quad \alpha >2.
\]
This writes:
\begin{corollary}\label{Li-Yau2}
For every $\alpha>2$, for every positive function $f$, $x \in \mathbf{G}$ and $t >0$,
\[
\Gamma (\ln P_t f)(x) +\frac{3}{2} \frac{1-e^{-\frac{2\rho t}{3\alpha}}}{\rho}  (Z \ln P_t f)^2 (x)  \leq
 \frac{3 \alpha -1}{\alpha -1}e^{-\frac{2\rho t}{3\alpha}}  \frac{LP_t f (x)}{P_t f (x)}
+  \frac{3}{2} \rho \frac{\left(
1-\frac{1}{3\alpha}\right)^2}{1-\frac{2}{\alpha}}
\frac{e^{-\frac{4\rho t}{3\alpha}}}{1-e^{-\frac{2\rho
t}{3\alpha}}}.\]
\end{corollary}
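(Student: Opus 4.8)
The plan is to deduce Corollary \ref{Li-Yau2} from Proposition \ref{inegdiff} by the same strategy used for Corollary \ref{Li-Yau}: pick the indicated function $b$, integrate the differential inequality from $0$ to $t$, and identify the resulting boundary and integral terms. First I would compute the logarithmic derivative of $b$; writing $g(s)=e^{-2\rho s/(3\alpha)}-e^{-2\rho t/(3\alpha)}$ so that $b=g^\alpha$, one finds $b'/b=\alpha g'/g$ and $b''/b'=(\alpha-1)g'/g + g''/g'$. Since $g'(s)=-\tfrac{2\rho}{3\alpha}e^{-2\rho s/(3\alpha)}$ satisfies $g''=-\tfrac{2\rho}{3\alpha}g'$, we get $g''/g'=-\tfrac{2\rho}{3\alpha}$, hence
\[
\frac{b''}{b'}+2\frac{b'}{b}+2\rho = (\alpha+1)\frac{g'}{g}-\frac{2\rho}{3\alpha}+2\rho.
\]
The whole point of the exponential choice is that the non-constant piece $(\alpha+1)g'/g$ combines with $g'=-\tfrac{2\rho}{3\alpha}e^{-2\rho s/(3\alpha)}$ and the $+2\rho$ to something manageable; I expect the design of $b$ forces the bracket $\Lambda(s):=\tfrac{b''}{b'}+2\tfrac{b'}{b}+2\rho$ to equal $-\tfrac{2\rho}{3\alpha}\cdot\tfrac{e^{-2\rho s/(3\alpha)}}{g(s)}\cdot(\text{const})$ plus lower-order, but in any case the key is that $\Lambda(s)$ is explicit and, crucially, $-b'(s)\,\Lambda(s)$ and $-b'(s)\,\Lambda(s)^2$ turn out to be integrable on $[0,t]$ precisely because $\alpha>2$ (this is where that hypothesis is consumed, exactly as in Corollary \ref{Li-Yau}).

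Next I would integrate the conclusion of Proposition \ref{inegdiff},
\[
\bigl(-b'\Phi_1+b\Phi_2\bigr)'(s)\ge -b'(s)\Bigl(\Lambda(s)\,LP_tf(x)-\tfrac14\Lambda(s)^2 P_tf(x)\Bigr),
\]
over $s\in[0,t]$. On the left, the fundamental theorem of calculus gives $\bigl(-b'\Phi_1+b\Phi_2\bigr)(t)-\bigl(-b'\Phi_1+b\Phi_2\bigr)(0)$. At $s=0$ one has $\Phi_1(0)=P_t f\cdot\Gamma(\ln P_tf)(x)$ and $\Phi_2(0)=P_tf\cdot(Z\ln P_tf)^2(x)$, while at $s=t$, $\Phi_1$ and $\Phi_2$ reduce to $f\,\Gamma(\ln f)$ and $f(Z\ln f)^2$ evaluated after applying $P_t$; the factor $b(t)=g(t)^\alpha=0$ kills the $b\Phi_2$ term at $s=t$, and since $\alpha>2$ one checks $b'(t)=0$ as well (indeed $b'=\alpha g^{\alpha-1}g'$ vanishes at $s=t$ because $g(t)=0$ and $\alpha-1>0$), so the entire boundary contribution at $s=t$ vanishes. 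Hence the left-hand side integrates to $b'(0)\,P_tf\cdot\Gamma(\ln P_tf)(x) - b(0)\,P_tf\cdot(Z\ln P_tf)^2(x)$, i.e. (up to sign) $-b'(0)$ times $\Gamma(\ln P_tf)+\tfrac{b(0)}{-b'(0)}(Z\ln P_tf)^2$ times $P_tf$.

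Then I would divide through by $-b'(0)P_tf(x)>0$ (note $b$ is decreasing so $b'(0)<0$) and evaluate the three explicit constants: the coefficient $b(0)/(-b'(0))$ of $(Z\ln P_tf)^2$, the coefficient $\bigl(\int_0^t -b'\Lambda\bigr)/(-b'(0))$ of $LP_tf/P_tf$, and the constant term $\tfrac14\bigl(\int_0^t -b'\Lambda^2\bigr)/(-b'(0))$. With $g(0)=1-e^{-2\rho t/(3\alpha)}$ and $b'(0)=-\alpha g(0)^{\alpha-1}\tfrac{2\rho}{3\alpha}$, the ratio $b(0)/(-b'(0)) = \tfrac{g(0)}{\tfrac{2\rho}{3}} = \tfrac32\cdot\tfrac{1-e^{-2\rho t/(3\alpha)}}{\rho}$, matching the claimed second coefficient. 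The integral $\int_0^t -b'(s)\Lambda(s)\,ds$ I would handle by noticing that much of $-b'\Lambda = -b'(\tfrac{b''}{b'}+2\tfrac{b'}{b}+2\rho) = -b'' -2\tfrac{(b')^2}{b}-2\rho b'$ is an exact derivative up to the middle term: $-b''-2\rho b'$ integrates to $-b'(t)+b'(0)-2\rho(b(t)-b(0)) = b'(0)+2\rho b(0)$ since $b(t)=b'(t)=0$, while $\int_0^t -2(b')^2/b\,ds$ is, for $b=g^\alpha$, equal to $-2\alpha^2\int_0^t g^{\alpha-2}(g')^2\,ds$; because $g'=-\tfrac{2\rho}{3\alpha}(g+e^{-2\rho t/(3\alpha)})$ this reduces to elementary integrals of $g^{\alpha-2}$, $g^{\alpha-1}$, $g^\alpha$ against a constant, all convergent as $\alpha>2$ and all expressible in closed form via the substitution making $g$ the variable. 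Carrying out these elementary integrations and simplifying yields the stated coefficient $\tfrac{3\alpha-1}{\alpha-1}e^{-2\rho t/(3\alpha)}$ in front of $LP_tf/P_tf$; an entirely analogous (if messier) computation of $\int_0^t -b'\Lambda^2$ produces the final term $\tfrac32\rho\tfrac{(1-1/(3\alpha))^2}{1-2/\alpha}\tfrac{e^{-4\rho t/(3\alpha)}}{1-e^{-2\rho t/(3\alpha)}}$. The main obstacle is purely computational: keeping track of the several elementary integrals $\int_0^t g^{\alpha-2}(g')^2$, $\int_0^t g^{\alpha-1}g'$, etc., without sign or bookkeeping errors, and then algebraically collapsing the answer into the compact closed form stated; there is no conceptual difficulty once one trusts that the exponential $b$ was reverse-engineered to make $\Lambda(s)$ proportional to $e^{-2\rho s/(3\alpha)}/g(s)$ so that every integral that appears is of the above elementary type.
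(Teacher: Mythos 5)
Your strategy is exactly the paper's: Corollary \ref{Li-Yau2} is obtained by inserting the stated function $b(s)=\bigl(e^{-2\rho s/(3\alpha)}-e^{-2\rho t/(3\alpha)}\bigr)^\alpha$ into Proposition \ref{inegdiff} and integrating over $[0,t]$, just as for Corollary \ref{Li-Yau}, and your handling of the boundary terms ($b(t)=b'(t)=0$), the division by $-b'(0)P_tf>0$, and the coefficient $b(0)/(-b'(0))=\tfrac32\,\tfrac{1-e^{-2\rho t/(3\alpha)}}{\rho}$ are all correct. (One hypothesis you should state: $\rho>0$ is what makes $g$, hence $b$, positive and decreasing on $[0,t]$, so that Proposition \ref{inegdiff} applies.)

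Two concrete slips, plus the identity you left implicit, deserve fixing. First, $\tfrac{b''}{b'}+2\tfrac{b'}{b}=(\alpha-1)\tfrac{g'}{g}+\tfrac{g''}{g'}+2\alpha\tfrac{g'}{g}=(3\alpha-1)\tfrac{g'}{g}-\tfrac{2\rho}{3\alpha}$, so the coefficient is $3\alpha-1$, not $\alpha+1$; this is precisely where the factors $3\alpha-1$ and $1-\tfrac{1}{3\alpha}$ in the statement come from, so carrying $\alpha+1$ forward would give wrong constants. Second, after integrating, the left side equals \emph{minus} the quantity of interest, so the coefficient of $LP_tf/P_tf$ is $\bigl(\int_0^t b'\Lambda\,ds\bigr)/(-b'(0))$, not $\bigl(\int_0^t(-b')\Lambda\,ds\bigr)/(-b'(0))$ (the latter is negative). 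Finally, the computation you defer as ``messy'' collapses at once from the identity $g'+\tfrac{2\rho}{3\alpha}g=-\tfrac{2\rho}{3\alpha}e^{-2\rho t/(3\alpha)}$ (a constant), which gives $\Lambda(s)=-\tfrac{2\rho(3\alpha-1)}{3\alpha}\,\tfrac{e^{-2\rho t/(3\alpha)}}{g(s)}$ — constant numerator $e^{-2\rho t/(3\alpha)}$, not $e^{-2\rho s/(3\alpha)}$ as you guessed. Since $-b'=-\alpha g^{\alpha-1}g'$, one gets that $(-b')\Lambda$ is a constant times $g^{\alpha-2}g'$ and $(-b')\Lambda^2$ a constant times $g^{\alpha-3}g'$, whose integrals are $-g(0)^{\alpha-1}/(\alpha-1)$ and $-g(0)^{\alpha-2}/(\alpha-2)$ (this is exactly where $\alpha>2$ is used); dividing by $-b'(0)=\tfrac{2\rho}{3}g(0)^{\alpha-1}$ then yields $\tfrac{3\alpha-1}{\alpha-1}e^{-2\rho t/(3\alpha)}$ and $\tfrac32\rho\,\tfrac{(1-\frac{1}{3\alpha})^2}{1-\frac{2}{\alpha}}\,\tfrac{e^{-4\rho t/(3\alpha)}}{1-e^{-2\rho t/(3\alpha)}}$, exactly the stated constants. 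With these corrections your argument is complete and coincides with the intended proof.
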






Moreover for $\rho>0$ and $t$ large, with more work we actually can do better.

\begin{corollary}\label{tps_long}
Let us assume $\rho >0$. There exist $t_0 >0$ and $C>0$, such that for any positive function $f$,
$$ |\partial_t \ln P_t f (x)| \leq  C \exp\left(-\frac{\rho t }{3}\right), \quad t \ge t_0, x \in \mathbf{G}.
$$
\end{corollary}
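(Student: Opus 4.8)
The plan is to exploit Proposition~\ref{inegdiff} with a well-chosen exponential weight $b$ that cancels the error terms, turn the resulting monotonicity into a \emph{uniform} (in $f$) gradient bound by first letting the flow regularise for a fixed time, and then convert that into the two-sided estimate on $\partial_t\ln P_tf=LP_tf/P_tf$. Throughout, $\mu$ denotes the normalised Haar measure on $\mathbf G$; since $\rho>0$ the Lie algebra relations force $\mathfrak g\cong\mathfrak{su}(2)$ (rescale $X,Y,Z$), so $\mathbf G$ is compact and $L$ is self-adjoint for $\mu$. As $\partial_t\ln P_tf$ is invariant under $f\mapsto cf$, I first normalise $\int_{\mathbf G}f\,d\mu=1$, and I fix once and for all a small $t_0>0$. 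By hypoellipticity the heat kernel $p_{t_0}(x,y)$ is smooth, hence $0<a\le p_{t_0}(x,y)\le b<\infty$ on the compact $\mathbf G\times\mathbf G$; combined with the conservation of mass $\int P_sf\,d\mu=1$ this gives $a\le P_sf(x)\le b$ for all $s\ge t_0$, all $x$, all admissible $f$. Differentiating $p_{t_0}$ in $x$ and using $P_{t_0}f\ge a$ yields uniform bounds $\|\Gamma(\ln P_{t_0}f)\|_\infty\le M$ and $\|(Z\ln P_{t_0}f)^2\|_\infty\le M'$; and $LP_{t_0}$ has a smooth kernel $\ell_{t_0}(x,y)$ with $\int_{\mathbf G}\ell_{t_0}(x,y)\,d\mu(y)=0$ (since $LP_{t_0}1=0$).

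Now apply Proposition~\ref{inegdiff} with $b(s)=e^{-\frac{2\rho}{3}s}$. This $b$ is positive, decreasing, $-b'>0$ and $-b/b'=\frac{3}{2\rho}>0$, so the choice is admissible, and it makes the coefficient $\frac{b''}{b'}+2\frac{b'}{b}+2\rho$ vanish identically; hence the right-hand side of the proposition is $0$, and
\[
s\ \longmapsto\ e^{-\frac{2\rho}{3}s}\Bigl(\tfrac{2\rho}{3}\,\Phi_1(s)+\Phi_2(s)\Bigr)\quad\text{is non-decreasing on }[0,T].
\]
I would run this with the function $g:=P_{t_0}f$ in place of $f$ and horizon $T:=t-t_0$, for $t\ge2t_0$. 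Since $\Phi_1,\Phi_2\ge0$, comparing $s=0$ (where the bracket equals $P_tf\cdot(\tfrac{2\rho}{3}\Gamma(\ln P_tf)+(Z\ln P_tf)^2)$, using $P_Tg=P_tf$) with $s=T$ (where it equals $P_T\bigl(g(\tfrac{2\rho}{3}\Gamma(\ln g)+(Z\ln g)^2)\bigr)$), and using $P_tf\ge a$ together with $M,M'$, one gets
\[
\tfrac{2\rho}{3}\,\Gamma(\ln P_tf)(x)+(Z\ln P_tf)^2(x)\ \le\ K_1\,e^{-\frac{2\rho}{3}t},\qquad t\ge2t_0,
\]
with $K_1=K_1(\rho,t_0)$ independent of $f$ and $x$; in particular $\Gamma(P_tf)=(P_tf)^2\Gamma(\ln P_tf)\le K_2\,e^{-\frac{2\rho}{3}t}$.

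To finish, note that $\frac{d}{dt}\|P_tf-1\|_{L^2(\mu)}^2=-2\int_{\mathbf G}\Gamma(P_tf)\,d\mu\ge-2K_2\,e^{-\frac{2\rho}{3}t}$ and $P_tf\to1$ in $L^2(\mu)$ (spectral gap on the compact connected $\mathbf G$), so integrating from $t$ to $\infty$ gives $\|P_tf-1\|_{L^2(\mu)}^2\le\frac{3K_2}{\rho}e^{-\frac{2\rho}{3}t}$. Writing $LP_tf=(LP_{t_0})(P_{t-t_0}f)$ and using that $\ell_{t_0}(x,\cdot)$ has zero $\mu$-mean,
\[
|LP_tf(x)|=\Bigl|\int_{\mathbf G}\ell_{t_0}(x,y)\bigl(P_{t-t_0}f(y)-1\bigr)\,d\mu(y)\Bigr|\le\|\ell_{t_0}\|_\infty\,\|P_{t-t_0}f-1\|_{L^1(\mu)}\le K_3\,e^{-\frac{\rho}{3}t}
\]
for $t\ge2t_0$. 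Dividing by $P_tf(x)\ge a$ yields $|\partial_t\ln P_tf(x)|=|LP_tf(x)|/P_tf(x)\le(K_3/a)\,e^{-\frac{\rho}{3}t}$, which is the claim, with $C=K_3/a$ and the role of $t_0$ played by $2t_0$.

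The main obstacle is uniformity in $f$: Proposition~\ref{inegdiff} only controls things through $\Gamma(\ln f)$, $(Z\ln f)^2$ at the initial time, which are not bounded uniformly over positive $f$; the remedy is to let the flow regularise for a fixed time $t_0$ and lean on the two-sided bounds and smoothing of the heat kernel on the compact group $\mathbf G$. A secondary point is the loss of a factor $2$ in the square root: the energy $\|P_tf-1\|_{L^2}^2$ inherits the rate $\frac{2\rho}{3}$ from $\int\Gamma(P_tf)\,d\mu\le K_2 e^{-\frac{2\rho}{3}t}$, so $\|P_tf-1\|_{L^2}$, and hence $LP_tf$, decay only at rate $\frac{\rho}{3}$ — this is exactly why the exponent in the statement is $\frac{\rho}{3}$ rather than $\frac{2\rho}{3}$ (and why the estimate is not expected to be sharp).
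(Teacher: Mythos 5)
Your argument is correct, but it follows a genuinely different route from the paper. The paper stays entirely inside the differential inequality of Proposition \ref{inegdiff}: writing $V(b)=-b^2b'$, it turns the choice of $b$ into an optimization over weights $V$ on $[0,1]$, and the whole point is that for $\rho>0$ the coefficient $\alpha(V)-2\rho t$ multiplying $\partial_t u$ can be made of \emph{either sign} (small and negative or small and positive, via the asymptotic choice $V=\lambda x^3$ glued to $\lambda\epsilon^{3-\gamma}x^\gamma$ with $\epsilon\sim e^{-2\rho t/3}$ and a shift $R=c\,t\,e^{-\rho t/3}$ of adjustable sign); since the left-hand side is nonnegative, this directly yields the two-sided bound $|\partial_t u|\le Ce^{-\rho t/3}$ with no input beyond the generalized curvature inequality. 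You instead use only the ``clean'' exponential weight $b(s)=e^{-2\rho s/3}$, which kills the right-hand side and gives the uniform gradient decay $\Gamma(\ln P_tf)+(Z\ln P_tf)^2\lesssim e^{-2\rho t/3}$ (essentially the large-time content of Corollary \ref{Li-Yau2}), and then you import global facts about the compact group --- Haar measure, symmetry of $L$, smoothness and strict positivity of the subelliptic heat kernel, ergodicity/$L^2$-convergence $P_tf\to 1$, and the kernel representation of $LP_{t_0}$ --- to upgrade this to $|LP_tf|\lesssim e^{-\rho t/3}$ and hence to the claim; your closing remark correctly identifies why the rate halves. What each approach buys: yours is softer and makes the mechanism transparent once compactness is granted, and it avoids the delicate asymptotic matching of $\alpha-2\rho t$ against $\beta-\alpha^2$; the paper's is intrinsic to the $\Gamma_2$ calculus, uses no a priori knowledge of $\mathbf G$, and this matters for the paper's program, since compactness, the spectral gap and the diameter bound are subsequently \emph{deduced} from this corollary --- your proof, while logically sound as a proof of the statement (compactness does follow from the classification of $\mathfrak g\cong\mathfrak{su}(2)$, as the paper itself concedes), would make that later narrative circular in spirit.

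Two small points to tighten: the lower bound $p_{t_0}\ge a>0$ needs strict positivity of the subelliptic heat kernel (a standard consequence of H\"ormander's condition and connectedness, but not of smoothness alone, and it is kernel positivity --- not just compactness --- that your uniform bounds $M,M'$ and $P_tf\ge a$ rest on); and the thresholds should be bookkept as $t\ge 3t_0$ at the last step (you need $t-t_0\ge 2t_0$ for the $L^2$ bound on $P_{t-t_0}f-1$), which is harmless since $t_0$ is at your disposal. Also note that for $P_tf\to1$ in $L^2$ plain ergodicity of the self-adjoint semigroup suffices; invoking a spectral gap is not needed and is better avoided, again because the paper proves the spectral gap \emph{after} this corollary.
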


\begin{proof}
To make this proof we have to be more precise in the study of the differential inequality of Theorem \ref{inegdiff}. Start with this inequality and set $V(b)=-b^2 b'$ for $b$ a positive decreasing function such that $b(t)=b'(t)=0$.
 The constraints   that the non negative function
    $V$ on $[0, b_{0}]$ must satisfy are
     $$t= \int_{0}^{b_{0}} \frac{x^{2}}{V(x)}dx$$ and
    $$\left(\frac{V(x)}{x^2}\right)_{x=0}=0.$$

    We then  get with $u_t= \ln P_t f$  and $a_{0}= \frac{V(b_{0})}{b_{0}^{2}}$
    $$a_{0}\Gamma(u_{t}) + b_{0}(Zu_{t})^{2} \leq A \partial_{t}
    u_{t} + B,$$ where for any choice of such a function
    $V$,
     one has$$
     A= \int_{0}^{b_{0}}
    \left(\frac{V'}{x^{2}}- 2 \rho t\right) dx, ~B= \frac{1}{4}\int_{0}^{b_{0}}
    \left(\frac{V'}{x^{2}}-2 \rho t\right)^2dx.
   $$

   In this system, we see that changing $V(s)$ into $\frac{V(\lambda
      s)}{\lambda^{3}}$ and $b_{0}$ into $\frac{b_{0}}{\lambda}$
      leaves $t$ unchanged and multiply every constant $a_{0}$, $A$
      and $B$ by $\frac{1}{\lambda}$. Therefore, we may assume
      that $b_{0}=1$ without any loss. Also,  changing $V(s)$ into $cV(s)$ allows us to reduce to the
      case $t=1$.  So finally we have rephrased the problem as follows.
       For any non negative function $V$ on $[0,1]$ such that
    $$\int_{0}^{1} \frac{x^{2}}{V}dx=1,
    ~\left(\frac{V(x)}{x^{2}}\right)_{x=0}=0, $$ and for any $u= \log P_{t}
    f$ with $f\geq0$
    one has
    $$V(1) \Gamma(u) + t (Zu)^{2} \leq (\alpha(V)-2 \rho t)\partial_{t}u +
    \frac{1}{4t}\left(\beta(V)-\alpha^{2}(V) + (\alpha(V)-2 \rho t)^{2}\right) ,$$ where
    $$\alpha(V) = \int_{0}^{1} \frac{V'}{x^{2}}dx, ~\beta(V)=
    \int_{0}^{1}\left(\frac{V'}{x^{2}}\right)^{2} dx.$$

    The preceding calculus is valid for any $\rho$. An easy integration by parts shows us the term $\alpha(V)$ is non negative whatever $V$ is. But now for $\rho>0$,  observe that this time the term $\alpha(V)- 2 \rho t$ can be made negative,  and therefore we may get as in the elliptic case with
     strictly positive Ricci bound a universal upper bound on
     $|\partial_{t} u|$.

One has the obvious inequalities
    $$\alpha(V) > V(1)- \left(\frac{V(x)}{x^{2}}\right)_{x=0}+ 8, ~ \beta(V) > \alpha(V)^{2},$$ and in the
    previous, no equality may occur (in the first one because then
    $\beta= \infty$ and in the second one because of the constraint
    on $V$.) The first inequality comes from
    $$\int_{0}^{1} \frac{V'}{x^{2}}dx = V(1)+ 2\int_{0}^{1}
    \frac{V}{x^{3}}dx,$$ and
    $$\int_{0}^{1} \frac{V}{x^{3}}dx \int_{0}^{1} \frac{x^{2}}{V }dx
    \geq \left(\int _{0}^{1} \frac{dx}{\sqrt{x}}\right)^{2} = 4.$$

    To make the term $\beta(V)-\alpha^{2}(V)$ small we are lead to
    choose $V= \lambda x^{3}$ on $[\epsilon,1]$
   and $V= \lambda \epsilon^{3-\gamma}x^{\gamma}$ on $[0, \epsilon]$,
   for some fixed $\gamma\in (5/2,3)$.
    The constraint on $V$ implies
    $$\lambda  = -\log \epsilon + \frac{1}{3-\gamma}.$$
    Meanwhile, we have

    $$\alpha = \lambda\left(3+2\epsilon\frac{3-\gamma}{\gamma-2}\right),$$ and
    $$\beta = \lambda^{2}\left(9+
    \epsilon\frac{(15-\gamma)(3-\gamma)}{2\gamma-5}\right),$$ so that
    $$\beta-\alpha^{2} = \lambda^{2} \epsilon
    \frac{(3-\gamma)^{2}}{\gamma-2}\left(\frac{\gamma+10}{2\gamma-5} +
    \epsilon\frac{4}{\gamma-2}\right).$$
    By taking $$\epsilon = \exp\left(-\frac{2 \rho}{3}t + \frac{1}{3-\gamma } + R \right)$$ for $t$ large enough to ensure $\varepsilon <1$, one obtains
    $$ \alpha - 2 \rho t \simeq -3 R$$
    and
    $$\beta-\alpha^{2} \simeq C t^2 \varepsilon \simeq C t^2
    \exp\left(-\frac{2\rho t }{3}\right).
    $$
    With $R=c t \exp(-\frac{\rho }{3})$ the  terms $(\alpha - 2 \rho t)^2$ and $\beta-\alpha^{2}$ are of the same order and
     playing now with the sign of $c$, one gets
     $$\left|\partial_{t} u\right| \leq C\exp\left(-\frac{t}{3}\right).$$
\end{proof}

Interestingly, only from these estimates, we can deduce that for
$\rho>0$ the Lie group $\mathbf{G}$ has to be compact. (This is of
course not new since the Lie algebra is that of a compact
semi-simple Lie group). But we also get an upper bound on the
diameter similar to the classical upper bound of the Myers's
theorem, together with some precise information on the Sobolev
constants and the spectral gap. Those considerations in fact show
that this parameter $\rho$ may serve as a substitute of the Ricci
lower bound for a Riemannian manifold.  We proceed first by showing
that in that case there is a spectral gap.

\begin{proposition}
Let us assume $\rho >0$. The spectrum of $-L$ lies in $\{ 0 \} \cup [ \frac{\rho}{3} , +\infty ]$.
\end{proposition}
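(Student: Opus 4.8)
The plan is to deduce the spectral gap directly from the exponential decay estimate of Corollary \ref{tps_long}. The key observation is that $|\partial_t \ln P_t f(x)| \le C \exp(-\rho t/3)$ for $t \ge t_0$ means that $\ln P_t f(x)$ is a Cauchy function of $t$ as $t \to \infty$: indeed, for $t_1, t_2 \ge t_0$,
\[
|\ln P_{t_2} f(x) - \ln P_{t_1} f(x)| \le \int_{t_1}^{t_2} C e^{-\rho s/3}\, ds \le \frac{3C}{\rho} e^{-\rho t_1/3}.
\]
Hence $P_t f(x)$ converges, uniformly in $x$ after taking logarithms, to a positive limit; since $\mathbf{G}$ is unimodular and $L$ is symmetric with respect to Haar measure, the natural candidate limit is $\int f\, d\mu$ (up to normalization), i.e.\ $P_t f \to \bar f$ where $\bar f$ is the mean of $f$. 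More precisely, $|\ln P_t f(x) - \ln \bar f| \le \frac{3C}{\rho} e^{-\rho t/3}$, so
\[
\|P_t f - \bar f\|_\infty \le \bar f\left(e^{\frac{3C}{\rho} e^{-\rho t/3}} - 1\right) \le C' e^{-\rho t/3}\|f\|_\infty
\]
for $t$ large, after a first-order expansion of the exponential.

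Next I would upgrade this $L^\infty$ decay to an $L^2$ spectral statement. Take $f$ in the domain of $L$, smooth and positive, orthogonal to the constants (or rather, apply the above to $f + \text{const}$ to make it positive, and subtract off the mean). Writing $f = \sum_\lambda f_\lambda$ in the spectral decomposition of $-L$, the $L^2$ norm of $P_t f - \bar f$ equals $\big(\int_{(0,\infty)} e^{-2\lambda t}\, d\|E_\lambda f\|^2\big)^{1/2}$. From the pointwise bound and a density/approximation argument (or directly, since on a compact group $L^\infty \subset L^2$) we get $\|P_t f - \bar f\|_2 \le C'' e^{-\rho t/3}$ for all large $t$ and all smooth positive $f$, hence for all $f \in L^2$ by density. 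If some spectral mass of $-L$ lay in the open interval $(0, \rho/3)$, say a nonzero projection onto $[\delta, \rho/3 - \delta]$ for small $\delta$, then choosing $f$ in the range of that spectral projection would give $\|P_t f\|_2 \ge e^{-(\rho/3 - \delta)t}\|f\|_2$, contradicting the decay rate $e^{-\rho t/3}$ as $t \to \infty$. This forces $\mathrm{spec}(-L) \subset \{0\} \cup [\rho/3, \infty)$.

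The step I expect to be the main obstacle is justifying that Corollary \ref{tps_long}, which is stated as a pointwise bound on $\partial_t \ln P_t f$ for positive $f$, actually yields convergence of $P_t f$ to the correct constant $\bar f$ and does so with a uniform constant independent of $f$. The positivity requirement means one cannot directly feed in a spectral-projection eigenfunction (which changes sign); the trick is to apply the estimate to $g = 1 + \varepsilon f/\|f\|_\infty$ and let the linear-in-$\varepsilon$ term carry the information, using that $P_t$ and $L$ are linear. One must also confirm that the limit of $P_t g$ is indeed the mean $\bar g$ rather than some other constant — this uses conservativeness of the semigroup ($P_t 1 = 1$) together with ergodicity of the heat flow on the compact group $\mathbf{G}$, or alternatively one simply notes that $P_t g - \bar g$ is orthogonal to constants in $L^2$ and its $L^\infty$ norm tends to $0$. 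Once these points are secured the spectral conclusion is immediate.
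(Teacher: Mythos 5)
Your overall strategy coincides with the paper's: integrate the bound of Corollary \ref{tps_long} in time to get exponential convergence of $P_t$ to equilibrium, then convert that decay into the statement that the spectrum of $-L$ is contained in $\{0\}\cup[\rho/3,\infty)$. The differences are in execution. The paper applies the estimate directly to the heat kernel $p_t(x,\cdot)$ (which is automatically a positive solution, so no $1+\varepsilon f$ trick is needed), and identifies the limit as a constant not by invoking compactness or ergodicity but by using Corollary \ref{Li-Yau2}: $\Gamma(\ln p_t)\le C(t)\to 0$, so the oscillation of $\ln p_t(x,\cdot)$ with respect to the Carnot--Carath\'eodory distance tends to $0$, the limit is constant, and finiteness of the invariant measure (hence $p_\infty=1$ after normalization) is \emph{derived} from the estimates rather than assumed. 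This matters for the spirit of the paper, whose point is that $\rho$ alone plays the role of a Ricci lower bound; your appeal to compactness of $\mathbf{G}$ and ergodicity of the heat flow is mathematically legitimate as external input, but it imports exactly the kind of information the paper wants to recover from the analysis. Finally, once $|\ln p_t(x,y)|\le C_2 e^{-\rho t/3}$ is known, the paper gets the $L^2$ statement in one line by Cauchy--Schwarz against the kernel: for $\int f\,d\mu=0$, $(P_tf)^2\le C_3e^{-2\rho t/3}\int f^2d\mu$, and then quotes the standard equivalence for symmetric Markov semigroups; this avoids your detour through $\|f\|_\infty$, density, and boundedness of functions in the range of spectral projections (which itself needs ultracontractivity or smoothness of the projection kernels to be fully justified).

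One point in your write-up is a genuine soft spot: the pointwise limit $\ell(x)=\lim_t \ln P_t g(x)$ obtained from the Cauchy estimate could a priori depend on $x$, and your second proposed justification (``$P_tg-\bar g$ is orthogonal to constants and its $L^\infty$ norm tends to $0$'') is circular, since the uniform convergence to the constant $\bar g$ is precisely what is to be proved. Your first justification (conservativeness plus ergodicity/irreducibility of the hypoelliptic heat semigroup on the compact group, so that the $P_s$-invariant limit must be constant with mean $\bar g$) does close the gap, but you should state it as the argument rather than an alternative; the paper's own closure of this gap is the gradient bound of Corollary \ref{Li-Yau2} combined with the distance (\ref{distance}).
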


\begin{proof}
We fix $x \in \mathbf{G}$ and denote by $p_t (x,\cdot)$ the heat kernel starting from $x$. We have for $t \ge t_0$,
\begin{align}\label{estimeenoyau}
\mid \partial_t \ln p_t (x,y) \mid \le  C \exp\left(-\frac {\rho
t}{3}\right).
\end{align}
This shows us that  $\ln p_t $ converges when $t\to\infty$. Let us
call  $\ln p_\infty $ this limit. Moreover, from Corollary
\ref{Li-Yau2},  $\Gamma(\ln p_t)$ is bounded above by a constant
    $C(t)$ which goes to $0$ when $t$ goes to $\infty$. Since
    the oscillation between $\ln p_t(x,y_1)$ and $\ln p_t(x,y_2)$ is bounded above
    by $\sqrt{C(t)} d(y_1,y_2)$, for the associated  Carnot-Carath\'eodory
    distance, which may be defined (see \cite{bakry-st-flour}) as
    \beq \label{distance} d(x,y)= \sup_{\{f, \Gamma(f,f) \leq1\}}
    f(x)-f(y), \eeq such that  if
    $\Gamma(f, f)\leq C$,  then $f(x)-f(y) \leq \sqrt{C}d(x,y)$.

    In
    the limit, $\ln p_{\infty}( x, \cdot)$ is a constant. We deduce from this that the
    invariant measure $\mu$ is finite. We may then
    as well suppose that this measure is a probability, in which case
    $p_{\infty}=1$. By integrating the inequality (\ref{estimeenoyau}) from $t$ to $\infty$ we therefore obtain for $t \ge t_0$:
\[
\mid \ln p_t (x,y) \mid \le C_2 \exp\left(-\frac {\rho t}{3}\right)
\]
and thus
\[
\exp\left( -C_2 \exp\left(-\frac {\rho t}{3}\right)\right) \le p_t
(x,y) \le \exp \left( C_2 \exp\left(-\frac {\rho
t}{3}\right)\right).
\]
 This implies by the Cauchy-Schwarz inequality that for $f \in L^2 (\mu)$ such that $\int f d\mu =0$,
 \[
 (P_t f)^2 \le C_3 \exp\left(-\frac {2\rho t}{3}\right) \int f^2 d\mu.
 \]
For a symmetric Markov semigroup $P_{t}$, this is a standard fact
(see \cite{bakry-st-flour} for example) that this is equivalent to
say that the spectrum of $-L$ lies in $\{0\}\cup [\rho/3, \infty)$,
or equivalently that we have a spectral gap inequality: for any
function $f$ in $L^{2}$ such that $\nabla f$ is in $L^{2}$, one has
\beq \label{SGInequality}\int f^{2}d\mu \leq \left(\int
fd\mu\right)^{2} + \frac{3}{\rho}\int \left|\nabla f\right|^{2}
d\mu. \eeq
\end{proof}
\begin{remark}
It can be shown that the spectral gap is actually $\frac{\rho}{2}$
and not $\frac{\rho}{3}$.
\end{remark}
We can now conclude with a substitute of the Myers's theorem:
\begin{proposition}
Assume that $ \rho >0$, then the diameter of $\mathcal{L}$ for the
Carnot-Caratheodory distance is finite.
\end{proposition}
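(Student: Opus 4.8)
The plan is to deduce finiteness of the diameter from the heat kernel lower bound and the gradient bound already established, exactly along the lines of the classical Li--Yau/Bakry--Ledoux argument but using the Carnot--Carath\'eodory distance \eqref{distance}. First I would recall from the proof of the previous proposition that, once the invariant measure $\mu$ is normalized to a probability, one has for $t\ge t_0$ the two-sided bound
\[
\exp\left(-C_2\exp\left(-\tfrac{\rho t}{3}\right)\right)\le p_t(x,y)\le \exp\left(C_2\exp\left(-\tfrac{\rho t}{3}\right)\right),
\]
uniformly in $x,y\in\mathbf{G}$. In particular $p_t(x,y)\le e^{C_2}$ for $t\ge t_0$, and more importantly $p_t(x,y)$ is bounded \emph{below} by a strictly positive constant $c>0$ independent of $x,y$ for all large $t$.

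Next I would invoke Corollary \ref{Li-Yau2}: for any positive $f$ and any $t>0$, $\Gamma(\ln P_t f)(x)\le C(t)$ where, fixing $\alpha$, the right-hand side of that corollary evaluated on $f=p_{t_0}(\cdot,y)$ (say) produces a bound of the form $C(t)\le \kappa\, e^{-2\rho t/(3\alpha)}\cdot\frac{LP_tf}{P_tf}+\dots$; using that $P_tf=p_{t+t_0}(x,y)$ is pinched between two constants close to $1$ and that $LP_tf$ is likewise controlled (e.g.\ via the analyticity of the semigroup on $L^\infty$, or simply by differentiating the heat kernel bound in $t$), one gets $\Gamma(\ln p_t(\cdot,y))(x)\le C(t)\to 0$ as $t\to\infty$. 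Alternatively, and more cleanly, I would apply the Li--Yau inequality of Corollary \ref{Li-Yau} directly, which gives $\Gamma(\ln P_tf)\le \frac{3\alpha-1}{\alpha-2}\frac{1}{t}\,P_tf/P_tf+\dots$ type control; the point is only that $\Gamma(\ln p_t(\cdot,y))$ tends to $0$ uniformly in $y$.

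The core step is then the oscillation estimate. Fix a large $t$ and set $f=\ln p_t(\cdot,y)$. Since $\Gamma(f,f)\le C(t)$ on all of $\mathbf{G}$, the distance characterization \eqref{distance} gives $|f(x_1)-f(x_2)|\le \sqrt{C(t)}\,d(x_1,x_2)$ for all $x_1,x_2$. On the other hand, from the two-sided heat kernel bound, $|f(x_1)-f(x_2)|=|\ln p_t(x_1,y)-\ln p_t(x_2,y)|\le 2C_2\exp(-\rho t/3)$, which is \emph{bounded} in $x_1,x_2$. To convert this into a diameter bound one applies it with $x_1,x_2$ chosen so that $d(x_1,x_2)$ is as large as we like if the diameter were infinite, and $y$ chosen to make the left-hand side large: taking $y=x_1$, we have $p_t(x_1,x_1)\ge$ const and $p_t(x_2,x_1)\le e^{C_2\exp(-\rho t/3)}$, so actually the lower bound must be used on one end — concretely, using that $\Gamma(f)\le C(t)$ is small while $f$ itself can be made to have a definite positive oscillation by comparing a point near the ``diagonal'' with a far point, one concludes $d(x_1,x_2)\le \frac{2C_2\exp(-\rho t/3)}{\sqrt{C(t)}}$. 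Optimizing this in $t$ (both numerator and denominator go to $0$, but $\sqrt{C(t)}$ decays only like $e^{-\rho t/(3\alpha)}$, slower than $e^{-\rho t/3}$ once $\alpha>1$), the ratio stays bounded as $t\to\infty$, which forces $\mathrm{diam}_d(\mathbf{G})<\infty$.

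The main obstacle I expect is making the oscillation comparison genuinely \emph{quantitative and uniform}: one needs a lower bound on the oscillation of $\ln p_t(\cdot,y)$ — i.e.\ that $p_t(\cdot,y)$ is not itself nearly constant faster than $\Gamma(\ln p_t(\cdot,y))$ decays — so that dividing through by $\sqrt{C(t)}$ does not blow up. The honest way around this is to not use a lower bound on the oscillation at all, but instead to run the Li--Yau inequality along a minimizing (sub-Riemannian) geodesic between two far points: integrating $\partial_t\ln P_tf\ge \Gamma(\ln P_tf)+\dots - C/t$ along such a path and using the two-sided heat kernel bound at the endpoints yields, just as in the Riemannian Bakry--Ledoux proof, an inequality of the shape $d(x_1,x_2)^2\le (\text{const})\,t$ for a suitable range of $t$; since the left side would be infinite were the diameter infinite while the right side is finite for each fixed $t$, we are done. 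I would present the argument in this second, geodesic-integration form, as it is the robust one and mirrors the classical Myers-type conclusion the proposition is advertised to imitate.
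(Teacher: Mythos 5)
Both versions of your argument stall at the same point: every inequality you derive has $d(x_1,x_2)$ sitting on the \emph{large} side, so nothing forces the diameter to be finite. In the oscillation argument, the gradient bound $\Gamma(\ln p_t(\cdot,y))\le C(t)$ together with \eqref{distance} gives $|\ln p_t(x_1,y)-\ln p_t(x_2,y)|\le \sqrt{C(t)}\,d(x_1,x_2)$; combining this with the smallness of the oscillation of $\ln p_t$ yields no upper bound on $d(x_1,x_2)$ at all (from $A\le \sqrt{C(t)}\,d$ and $A\le \varepsilon$ one cannot conclude $d\le \varepsilon/\sqrt{C(t)}$), and in any case a bound on $d$ through the definition \eqref{distance} requires controlling $f(x_1)-f(x_2)$ for \emph{every} $f$ with $\Gamma(f)\le 1$, not just for the single function $\ln p_t(\cdot,y)$ — you acknowledge this obstacle but the fix is not carried out. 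The ``honest'' geodesic-integration version has the same defect: integrating the Li--Yau inequality along a minimizing path produces the Harnack inequality $p_{t_1}(x_1,z)\le p_{t_2}(x_2,z)\,(t_2/t_1)^{C}\exp\bigl(A\,d(x_1,x_2)^2/(4(t_2-t_1))\bigr)$, where $d^2$ appears in the exponent on the majorizing side; fed with the two-sided kernel bounds this gives at best a \emph{lower} bound on $d^2$, never an inequality of the shape $d(x_1,x_2)^2\le C\,t$. To conclude along these lines you would need an off-diagonal Gaussian \emph{upper} bound $p_t(x,y)\le C t^{-D}e^{-d(x,y)^2/(Ct)}$ to play against the uniform positive lower bound on $p_t$, and such an estimate is established neither in the paper nor in your sketch (it requires genuinely more work than Harnack, e.g.\ Davies-type weighted estimates). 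Note also that ``the left side would be infinite were the diameter infinite'' is not a valid endgame: the Carnot--Carath\'eodory distance between any two fixed points is finite; what is needed is a bound uniform over all pairs, which is exactly what the unproven inequality was supposed to supply.

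For comparison, the paper's proof takes a different and indirect route that avoids this sign problem entirely: the small-time Li--Yau estimate gives $\partial_t\ln p_t\ge -C/t$, hence $p_t\le At^{-C}$, i.e.\ ultracontractivity with a polynomial rate; by \cite{Varopoulos} this is equivalent to a Sobolev inequality \eqref{SobIn}; combined with the spectral gap \eqref{SGInequality} from the previous proposition, the Sobolev inequality can be tightened to one with $A=1$; and the Bakry--Ledoux form of Myers's theorem \cite{bakry-ledoux-myers} then yields finiteness of the diameter for the distance \eqref{distance}. If you want to keep a more ``hands-on'' argument in the spirit of your proposal, you would have to first upgrade the Harnack inequality to a Gaussian off-diagonal upper bound and only then compare it with the uniform lower bound on $p_{t_0}$.
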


\begin{proof}
    We are now going to prove a Sobolev inequality for the invariant measure $\mu$.
    Indeed, for $0<t\leq t_{0}$ we have
    $$\partial_{t} \ln p_t \geq -C/t,$$ from which we get
    $$\ln p_{t_{0}}-\ln p_{t} \geq -C\log (t_{0}/t),$$ and therefore
    $$\ln p_t \leq A - C \log t$$ where $A$ is a constant.
    This gives the ultracontractivity of the semigroup $P_t$ with a
    polynomial bound $t^{-C}$ when $t\to 0$.

   Now it is a well known fact (see \cite{Varopoulos, bakry-st-flour}) that this last property
   is equivalent to a Sobolev inequality
   \beq\label{SobIn}\left(\int f^{\frac{2C}{C-1}}d\mu\right)^{\frac{C-1}{C}}\leq A\int f^{2}
   d\mu + B \int\left\| \nabla f\right\|^{2} d\mu. \eeq

   When we have both Sobolev inequality (\ref{SobIn}) and spectral
   gap inequality (\ref{SGInequality}) then  (see
   \cite{bakry-st-flour}) we have a tight Sobolev
   inequality, that is the Sobolev inequality (\ref{SobIn}) with
   $A=1$.

   In this situation, the diameter of $E$ with respect to the
   distance defined in \ref{distance} is finite (see
   \cite{bakry-ledoux-myers}), which concludes the proof.

\end{proof}

\end{document}